\title[ ] {A Separation lemma on sub-lattices}
\author
        {W.-M.~Wang}
\address{CNRS and D\'epartement de Math\'ematique \\Cergy Paris Universit\'e \\95302 Cergy-Pontoise Cedex, France}
\email{wei-min.wang@math.cnrs.fr}
\thanks{{\it Keywords and phrases.} prime decomposition, 
quasi-periodic Fourier series, sub-lattice, sub-module, affine space, nonlinear PDE}
 \theoremstyle{plain}
\newtheorem{theorem}{Theorem}[section]
\newtheorem{lemma}[theorem]{Lemma}
\newtheorem*{lemman}{New Separation Lemma}
\newtheorem*{lemmaw}{Small Scale Separation Lemma}
\newtheorem*{corollaryn}{Corollary}
\newtheorem*{lemmam}{Main Lemma}
\theoremstyle{definition}
\newtheorem{remark}[theorem]{Remark}
\numberwithin{equation}{section}
\begin{document}
\date{}
\begin{abstract}
We prove that Bourgain's separation lemma, Lemma~20.14 \cite{B2} holds 
at {\it fixed} frequencies and their neighborhoods, on sub-lattices, sub-modules
of the {\it dual lattice} associated with a quasi-periodic Fourier series in two dimensions.
And by extension holds on the affine spaces.
Previously Bourgain's lemma was not deterministic, and is valid only for a set of frequencies of positive measure. 
The new separation lemma generalizes classical lattice partition-type results to the hyperbolic Lorentzian setting,
with signature $(1, -1, -1)$, and could be of independent interest.

Combined with the method in \cite{W2}, this
should lead to the existence of quasi-periodic solutions
to the nonlinear Klein-Gordon equation with the usual polynomial nonlinear term
$u^{p+1}$.

 \end{abstract}

\maketitle
\section {Introduction and the new separation lemma}   
We consider the nonlinear Klein-Gordon equation (NLKG) on the $d$-torus $\mathbb T^d=[0, 2\pi)^d$:
\begin{equation}\label{(1.1)}
\frac{\partial^2u}{\partial t^2}-\Delta u+u+u^{p+1}=0,
\end{equation}
where $\Delta$ is the Laplacian:
$$\Delta u=\sum_{i=1}^d\frac{\partial^2 u}{\partial x_i^2};$$
$p\in\mathbb N$ and is arbitrary; considered as a function on $\mathbb R^d$, $u$ is periodic, satisfying  
$u(\cdot, x)=u(\cdot, x+2j\pi)$, for all $j\in\mathbb Z^d$. We are interested in solutions which depend quasi-periodically,
``periodic" with several frequencies on time in the form of a (convergent) series:
\begin{equation}\label{1.2}
u(t, x)=\sum_{(n,j)\in\Bbb Z^b\times \Bbb Z^d}a(n, j)e^{ i({n\cdot\omega t}+j\cdot x)},
\end{equation}
where $b$ corresponds to the number of base frequencies in time, $\cdot$ is the usual scalar product, and
$\omega\in\mathbb R^b$ is the frequency.
It is convenient to think of $u$ as defined on the torus $\mathbb T^b\times\mathbb T^d$.  
Series of the form \eqref{1.2} form a closed set under multiplication. So $u^{p+1}$ is also of this form. 
\smallskip

Below we write $\mathbb Z^{b+d}$ for $\mathbb Z^b\times \mathbb Z^d$,
seen as the {\it dual lattice} to $\mathbb T^b\times \mathbb T^d$.
To simplify language, we will often say that 
$\mathbb Z^{b+d}$ is the lattice, or module.
Our goal in this paper
is to derive geometric properties of 
(1.2) under the action of the linear operator: 
\begin{equation}\label{1.3}
L:=\frac{\partial^2}{\partial t^2}-\Delta + 1.
\end{equation}

The linear operator $L$ is separable. Define the wave operator $D$: 
\begin{equation}\label{1.4}
D:=\sqrt{-\Delta+1}.
\end{equation} 
Its spectrum: 
$$\sigma (D)=\{\sqrt{|j|^2+1}\,|\,j\in\Bbb Z^d\},$$
by using the Fourier series in (1.2) and setting $t=0$.
In $d\geq 2$, the spectrum is degenerate, and the spacing of non-equal eigenvalues tends to zero. 

Operating on the full quasi-periodic Fourier series (1.2), $L$ leads to a diagonal matrix $P$ on $\mathbb Z^{b+d}$
with matrix elements: 
\begin{equation}\label{1.5}
P(n, j):=P((n, j), (n, j))=(n\cdot\omega)^2-j^2-1, \, (n, j)\in\mathbb Z^{b+d},
\end{equation}
where for simplicity, $j^2$ stands for $|j|^2$. 
The set of $(n, j)$ such that $|P(n, j)|<1$ plays an essential role in solving (1.1).  In particular,
since $P=0$ solves the linear equation:
\begin{equation}\label{1.6}
\frac{\partial^2u}{\partial t^2}-\Delta u+u=0,
\end{equation}
we shall start from the linear solutions. 

Denote linear solutions by $u^{(0)}$, and let
\begin{equation}\label{1.7}
u^{(0)}(t, x)=\sum_{k=1}^b a_k e^ {i(-(\sqrt{j_k^2+1})t+j_k\cdot x)},
\end{equation}
where $j_k\in\mathbb Z^d\backslash\{0\}$, $k=1, 2, ..., b$, be such a solution. We may write $u^{(0)}$ in the form (1.2):
$$
\aligned u^{(0)}(t, x)&=\sum_{k=1}^b a_k e^{i (-(\sqrt{j_k^2+1})t+j_k\cdot x)}\\
:&=\sum_{k=1}^b a(-e_{k},  j_k)e^{i(-e_k\cdot\omega^{(0)}t+j_k\cdot x)}, \endaligned
$$
where $e_{k}$, $k=1, 2, ..., b$, are the canonical basis of $\mathbb Z^b$,
$$a(-e_{k}, j_k)=a_k, $$ and
$$\omega^{(0)}:=\{\sqrt{j_k^2+1}\}_{k=1}^{b},\, (j_k\neq 0).$$


We are interested in small solutions to (1.1). It is convenient to add $\delta$, $\delta\ll 1$ to the nonlinear term and write
the NLKG as 
\begin{equation}\label{1.8}
\frac{\partial^2u}{\partial t^2}-\Delta u+u+\delta u^{p+1}=0.
\end{equation}
We seek $u$ close to  $u^{(0)}$.
From the structure of the equations (1.7) and \eqref{1.8}, 
the sub-lattice $\mathcal U\subset \mathbb Z^{b+d}$, generated by
the set $\{(-e_k, j_k)\}_{k=1}^b$ plays a special role. We call it 
the {\it sub-module generated by  $u^{(0)}$}.
Writing the left side of \eqref{1.8} as $F(u)$, this is also in view 
of the (functional) derivative: 
$$F'(u)= \frac{\partial^2}{\partial t^2}-\Delta +1+\delta (p+1)u^{p},$$
which plays a central role in the nonlinear analysis, cf. \cite{W2}\cite{W1}. 
\smallskip

\noindent{\it Remark.} 
In the process of solving (1.8), $\omega^{(0)}$ is replaced by $\omega$, which is continuously being modulated,
but it {\it does not} change the dual lattice $\mathbb Z^{b+d}$, nor the sub-lattice, sub-module $\mathcal U\subset \mathbb Z^{b+d}$.

\begin{lemma} On the sub-module, sub-lattice $\mathcal U\subset \mathbb Z^{b+d}$, generated by
the set $\{(-e_k, j_k)\}_{k=1}^b$, if $(\nu, \eta)\in\mathcal U$, and $\nu=0$, then $\eta=0$.  
\end{lemma}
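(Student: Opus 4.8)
The plan is to use the explicit description of $\mathcal{U}$ as the set of $\mathbb{Z}$-linear combinations of the generators, and to exploit the fact that the generators are ``triangular'' with respect to the splitting $\mathbb{Z}^{b+d}=\mathbb{Z}^b\times\mathbb{Z}^d$: the first block of the $k$-th generator is $-e_k$, and these are the standard basis vectors of $\mathbb{Z}^b$.

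First I would write a general element of $\mathcal{U}$ as
\begin{equation}\label{genel}
(\nu,\eta)=\sum_{k=1}^b c_k(-e_k,j_k)=\Bigl(-\sum_{k=1}^b c_k e_k,\ \sum_{k=1}^b c_k j_k\Bigr),\qquad c_k\in\mathbb{Z}.
\end{equation}
Reading off the first $b$ coordinates gives $\nu=-(c_1,\dots,c_b)$, since $\{e_k\}_{k=1}^b$ is the canonical basis of $\mathbb{Z}^b$. Hence the hypothesis $\nu=0$ forces $c_k=0$ for every $k=1,\dots,b$.

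Substituting $c_k=0$ into the second block of \eqref{genel} yields $\eta=\sum_{k=1}^b 0\cdot j_k=0$, which is the claim. In other words, the projection $\pi:\mathbb{Z}^{b+d}\to\mathbb{Z}^b$ onto the first block restricts to an isomorphism $\pi|_{\mathcal{U}}:\mathcal{U}\xrightarrow{\ \sim\ }\mathbb{Z}^b$ (its inverse sends $-e_k\mapsto(-e_k,j_k)$), so $\ker(\pi|_{\mathcal{U}})=\{0\}$; the stated implication is exactly the assertion that this kernel is trivial.

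There is no real obstacle here: the only thing being used is the $\mathbb{Z}$-linear independence of $\{e_k\}$ in $\mathbb{Z}^b$, which is immediate. The single point worth stating carefully is that ``sub-module/sub-lattice generated by'' means the set of integer combinations \eqref{genel} (not, say, a saturation or a rational span), so that the coefficients $c_k$ are genuine integers and the argument via the first block is legitimate; the values $j_k\neq 0$ and the specific geometry in $\mathbb{Z}^d$ play no role in this particular lemma, and would only become relevant in the finer separation statements later in the paper.
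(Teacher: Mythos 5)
Your proof is correct and is essentially the same as the paper's: both write a general element of $\mathcal{U}$ as an integer combination of the generators, observe that the first block identifies the coefficients because $\{e_k\}$ is the canonical basis of $\mathbb{Z}^b$, and conclude that $\nu=0$ forces all coefficients (hence $\eta$) to vanish. Your added remark that the projection onto the first block is an isomorphism on $\mathcal{U}$ is a harmless reformulation of the same observation.
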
\label{1.1}
\begin{proof} This follows from the property of the generating set:  
$\{(-e_k, j_k)\}_{k=1}^b$. Let $$\nu=\sum_{k=1}^bC_ke_k.$$
If $\nu=0$, then $C_k=0$ for $k=1, 2, ..., b$. So 
$$\eta=-\sum_{k=1}^bC_kj_k=0.$$ 
\end{proof} 

\noindent{\it Remark.} The converse is generally not true, since in general, $b>d$. 

\subsection{The new separation lemma}
Denote by $\pi_k$ the set of primes ($\neq 1$) that occur in odd powers in the prime decomposition of
$j_k^2+1$, $j_k\neq 0$, $k=1, 2, ..., b$. 
Assume that 

\smallskip

\noindent {(*)} $\pi_k\neq\emptyset$, $k=1, 2, ..., b$, and $\pi_k\not\subseteq \pi_{k'}$ for $k\neq k'$.
 
Let $S$ be the set of $j_1, j_2, ..., j_b\in\mathbb Z^d$ such that (*) is satisfied, and $S'$ the set of $j_1, j_2, ..., j_b\in\mathbb Z^d$ used in \cite{W2}, i.e., such that 
 $$1< j_1^2+1<j_2^2+1<\cdots.<j_{b}^2+1,$$
and square-free, then we have
$$S\supset S'.$$ Recall that $S'$ has positive density from \cite{LX}, so $S$ has (at least) positive density as well.

\smallskip

Our main result is the following: 

\begin{lemman}
Let $d=2$, and 
$$u^{(0)}(t, x)=\sum_{k=1}^b a_k e^ {i(-(\sqrt{j_k^2+1})t+j_k\cdot x)},$$
be a solution to the linear equation \eqref{1.6}.
Assume  that 
(*) is satisfied.
Let $\delta\in (0, 1]$ and $B>1$ be an integer. 
Set $$\omega=\omega^{(0)}+\delta \omega'\in\mathbb R^b,$$
with $|\omega'|\leq1$. There exists a set $\mathcal A$, $0\in\mathcal A$, in $\omega'$, whose
complement is of measure less than $\delta$, such that on $\mathcal A$, 
the following holds:

Let $\theta\in\mathbb R$, and set $x:=(n, h)\in\mathbb Z^{b+2}$. Let $\mathcal U\subset \mathbb Z^{b+2}$ be 
the sub-module generated by $u^{(0)}$. Consider a sequence, a subset  $\{x_k\}_{k=0}^\ell\subset\mathcal U\subset\mathbb Z^{b+2}$,
satisfying 
\begin{equation}\label {5.15}
|x_i-x_{i-1}|<B, \quad 1\leq i\leq \ell.
\end{equation}
Assume that 
for all $i$, $1\leq i\leq \ell$,
\begin{equation}\label{5.16}
|(n_i\cdot\omega+\theta)^2-h_i^2-1|<1,
\end{equation}
and that there exists $B'>1$, such that 
\begin{equation}\label{5.17}
\text {max}_{m\in\mathbb Z^2}|\{k|h_k=m; 0\leq k\leq \ell\}|< B'.\end{equation}
There exists $B_0>0$,  such that if $B>B_0$, then there exists
$C>1$, and
\begin{equation}\label{5.18}
\ell<(BB')^{C},\end{equation}
for all fixed $\theta$.
\end{lemman}
 
We note that the lemma holds {\it at} $\omega=\omega^{(0)}$, and the
separation property is {\it stable} near $\omega^{(0)}$.
This refines Bourgain's separation lemma,  Lemma 20.14 \cite{B2}, which uses
$\omega$ as parameters, in two dimensions. The new Lemma generalizes the classical lattice partition-type
results to the hyperbolic Lorentzian setting with signature $(1, -1, -1)$, at irrational square root frequencies,
and is essential to the proof of quasi-periodic solutions to (1.1). 
\smallskip 

\begin{corollaryn} The New Separation Lemma holds on the affine spaces $\alpha+\mathcal U$ of the module $\mathbb Z^{b+2}$, for all
$\alpha\in\mathbb Z^{b+2}$, i.e., the conclusion of the Lemma holds with $\alpha+\mathcal U$ replacing $\mathcal U$, for all $\alpha$. 
\end{corollaryn}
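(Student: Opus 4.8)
The plan is to deduce the Corollary from the New Separation Lemma by a change of variables in the time--frequency coordinate $n$, which already disposes of all but finitely many affine cosets, and then to check that the proof of the Lemma applies to the few remaining ones. First, by Lemma~1.1 the projection $\mathcal U\to\mathbb Z^{b}$, $(\nu,\eta)\mapsto\nu$, is injective, so $\mathcal U$ is the graph $\{(\nu,\phi(\nu)):\nu\in\mathbb Z^{b}\}$ of the linear map $\phi(\nu)=-\sum_{k}\nu_{k}j_{k}$; consequently, for $\alpha=(\alpha_{1},\alpha_{2})\in\mathbb Z^{b+2}$,
\[
\alpha+\mathcal U=\mathcal U_{c}:=\{(\nu,\phi(\nu)+c):\nu\in\mathbb Z^{b}\},\qquad c:=\alpha_{2}-\phi(\alpha_{1})\in\mathbb Z^{2}.
\]
Thus the Corollary is exactly the New Separation Lemma with the identity $h=\phi(n)$ valid on $\mathcal U$ replaced by $h=\phi(n)+c$, for every fixed $c\in\mathbb Z^{2}$.

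Next I would use translations in $n$. For $C_{0}\in\mathbb Z^{b}$ the map $(n,h)\mapsto(n-C_{0},h)$ sends $\mathcal U_{c}$ onto $\mathcal U_{c+\phi(C_{0})}$, leaves \eqref{5.15} and \eqref{5.17} untouched (the latter depends only on the $h$-coordinates, which are unchanged), and turns \eqref{5.16} into the same inequality with $\theta$ replaced by $\theta+C_{0}\cdot\omega$; since the Lemma holds for every fixed $\theta$ with $B_{0}$, $C$, $\mathcal A$ independent of $\theta$, this is harmless. Writing $\Gamma:=\phi(\mathbb Z^{b})\subset\mathbb Z^{2}$ for the subgroup generated by $j_{1},\dots,j_{b}$, any coset $\mathcal U_{c}$ with $c\in\Gamma$ --- in particular $c=0$, i.e.\ $\alpha\in\mathcal U$ --- is carried onto $\mathcal U=\mathcal U_{0}$, so for such $\alpha$ the Corollary is immediate, with the same $\mathcal A$, $B_{0}$, $C$ as in the Lemma. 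If the $j_{k}$ generate $\mathbb Z^{2}$ this finishes the proof. Otherwise $\Gamma$ has finite index $N$ in $\mathbb Z^{2}$ (in the generic case where the $j_{k}$ span $\mathbb R^{2}$; if they are collinear, $\Gamma$ has rank one and a direct one-dimensional argument applies), and one is reduced to the finitely many residual cosets $\mathcal U_{c_{1}},\dots,\mathcal U_{c_{N}}$ with $c_{1},\dots,c_{N}$ a set of representatives of $\mathbb Z^{2}/\Gamma$.

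For each residual coset I would then revisit the proof of the New Separation Lemma with $h$ translated by the fixed vector $c_{i}$, that is, with the quadric $(n\cdot\omega+\theta)^{2}=h^{2}+1$ replaced by $(n\cdot\omega+\theta)^{2}=|h+c_{i}|^{2}+1$, equivalently with the ambient $\mathbb Z^{2}$ in the $h$-factor translated by $c_{i}$. The geometry of a $B$-bounded chain shadowing this quadric under the multiplicity constraint \eqref{5.17} is unaffected by the fixed shift, and the arithmetic hypothesis (*) --- that each $j_{k}^{2}+1$ carries a prime $p_{k}$ to an odd power, the $p_{k}$ distinct --- is a property of the generators of $\mathcal U$, hence inherited by every affine coset; the only new terms, the constant $|c_{i}|^{2}$ and the fixed affine-linear form $2c_{i}\cdot\phi(\nu)$ produced when $|h+c_{i}|^{2}$ is expanded, are absorbed exactly as the $\theta$-dependence and the unit slack in \eqref{5.16} are absorbed in the original argument (the relevant congruences modulo $p_{k}$ acquiring only the fixed data $c_{i}\bmod p_{k}$). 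Running the proof with these modifications gives $\ell<(BB')^{C}$ on each $\mathcal U_{c_{i}}$; taking for $C$ the largest of the finitely many resulting exponents and for $\mathcal A$ the intersection of the finitely many exceptional sets (after shrinking the initial $\delta$ by the fixed factor $N$), one gets the conclusion for all $\alpha$ simultaneously.

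The step I expect to be the main obstacle is the last one: verifying that the proof of the New Separation Lemma is genuinely insensitive to the fixed translation $h\mapsto h+c_{i}$, and in particular that the number-theoretic mechanism driven by (*) still functions when $-\sum_{k}\nu_{k}j_{k}$ is replaced by $c_{i}-\sum_{k}\nu_{k}j_{k}$. The coset description coming from Lemma~1.1 and the reduction by translation in $n$ are routine.
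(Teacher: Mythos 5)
There is a gap, and it sits exactly where you yourself flag it: the treatment of the residual cosets $\mathcal U_{c_i}$ is not carried out, only asserted (``the new terms are absorbed exactly as the $\theta$-dependence is absorbed''). That assertion is the entire content of the Corollary for those cosets, so the proposal does not close. The reduction via $(n,h)\mapsto(n-C_0,h)$ is correct as far as it goes, but it only disposes of $\alpha\in\mathcal U$ (and of $c\in\Gamma$), and for the remaining finitely many cosets you would genuinely have to re-run the number-theoretic argument with $h=\phi(n)+c_i$ --- which you do not do.

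The observation you are missing, and which is the paper's entire proof, is that no re-derivation is needed for any coset: the proof of the New Separation Lemma uses membership in $\mathcal U$ only through the \emph{differences} $x_i-x_{i-1}$ (more generally $x_i-x_j$), since Lemmas~2.1, 2.2 and the Main Lemma are applied to vectors $\xi=x_i-x_j$, and the pointwise constraint \eqref{5.16} is a hypothesis on the chain, not something derived from $\mathcal U$-membership. For a chain in $\alpha+\mathcal U$ one has $x_i-x_j=(\alpha+u_i)-(\alpha+u_j)=u_i-u_j\in\mathcal U$, so every determinant bound in the proof applies verbatim, with the same set $\mathcal A$ and the same constants $B_0$, $C$, independent of $\alpha$. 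Your proposed route implicitly assumes the proof exploits the identity $h=\phi(n)$ pointwise along the chain; it does not, and once this is noticed the coset decomposition, the finite-index discussion, and the intersection of exceptional sets all become unnecessary.
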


This is because the proof of the Lemma relies only on the differences $x_i-x_{i-1}$, $i=1, 2, ..., \ell$. \hfill$\square$

\smallskip

\subsection{The role of separation lemmas} We note that the New Separation Lemma is {\it asymptotic}, it is applicable for large $B$. (This is also the case
for Lemma~20.14 \cite{B2}.) Chap.~20 \cite{B2} deals with {\it parameter dependent} wave equations,
and asymptotic properties are sufficient for a proof of quasi-periodic solutions. (This is also the case in Chap.~19, for the {\it parameter dependent} nonlinear Schr\"odinger equations (NLS).)

For a {\it fixed} equation, such as (1.1), however, this is not enough, we need precise, {\it non-asymptotic} separation properties as well. 
This is provided by Lemma~5.2 \cite{W2}, which is a separation lemma at small scales, valid in arbitrary dimensions $d$.
(Cf. the reformulation of Lemma~5.2 in sect.~2.3.)  
 The lack of a separation 
lemma at large scales was the only reason why in \cite{W2} we were not able to deal with polynomial nonlinearity.  

We expect that the combination of the New Separation Lemma with Lemma~5.2 \cite{W2}
will now lead to a proof of quasi-periodic solutions to the NLKG in (1.1). 
(Remark that this approach is analogous to that used in \cite{W1} for the fixed NLS.)

\smallskip

Note that quasi-periodic solutions to NLS are known from the works \cite{W1}\cite{PrPr}, cf. also the earlier related works \cite{B1}\cite{B2}\cite{EK},
but the corresponding problem for NLKG in higher dimensions is significantly more difficult due to the shrinking gaps and the degenerate spectra.
In particular, number theory seeps into the arguments.

\smallskip

The New Lemma proves a separation property on a hyperbolic Lorentzian lattice of signature $(1, -1, -1)$.
Since most of the lattice partition-type results are in the elliptic (parabolic) setting, we believe the Lemma is of 
independent interest, aside from its applications to the NLKG in (1.1). So we present it
separately. This is also because it seems to be a good occasion to elucidate some of the algebraic and geometric
structures underlying the approach developed in \cite{W1}\cite{W2}. (See also the review paper \cite{W3}.)

\bigskip

This paper is self-contained, the proofs are mostly algebraic and elementary, and can be read independently, without prior knowledge of NLKG. 

\subsection{Ideas of the proof}
As mentioned above, most of the lattice partition-type results are on Euclidean lattice, 
for example, the separated cluster structure of the $(n_1, n_2, ..., n_d)\in\mathbb Z^d$
on the hypersphere: 
$$n_1^2+n_2^2+...+n_d^2=R.$$
The positive definite signature $(1, 1, ..., 1)$
played a key role.

Lorentzian lattice deprives us of positivity, however, in its place, one may try to
impose certain non-zero conditions on quadratic forms.  Given $\omega\in\mathbb R^b$,
for $\xi=(\nu, \eta)\in\mathbb Z^{b+d}$,  
define the operators $T_\pm$ from $\mathbb R^{b+d}\to \mathbb R^{1+d}$
to be:
\begin{equation*}
T_\pm \xi=T_\pm (\nu, \eta)=(\nu\cdot\omega, \pm\eta).
\end{equation*}
Note that 
$$T_+\xi T_-\xi=(\nu\cdot\omega)^2-\eta^2,$$
and is of a form similar to the left side in \eqref{5.16}. 

In $d=2$, for example, given two vectors 
$0\neq \xi\neq \xi' \in \mathbb Z^{b+2}$, $\text{dim} \{\xi, \xi'\}=2$, one may define
the following $2\times 2$ matrix: 

\begin{equation*}
T_{+-}=\begin{pmatrix} T_+\xi T_-\xi&T_+\xi T_-{\xi'}\\
 T_+{\xi'}T_-\xi&T_+{\xi'}T_-{\xi'}\\
 \end{pmatrix}.
\end{equation*}
Assume $\eta$ or $\eta'\neq 0$, Lemma~20.23 \cite{B2} proves that 
$|\det T_{+-}|$ has a  lower bound 
on a large set of $\omega\in(0, 1]^b$. Similarly for the $1\times 1$ determinants, when $\text{dim} \{\xi, \xi'\}=1$.
More generally, Lemma~20.23 \cite{B2} proves lower bounds on the absolute values of all such determinants, on a large set of $\omega\in(0, 1]^b$, in arbitrary dimensions $d$.

One may view $T_{+-}$ as an analogue of a Gram matrix in the Lorentzian setting. The lower bounds 
permit proof of the separation lemma, Lemma~20.14 \cite{B2}. 

\subsection{The new idea for fixed frequencies and their neighborhoods} The proof of the New Separation Lemma
hinges on proving lower bounds on $|\det T_{+-}|$. The obvious difficulty is that, unlike in Lemma~20.23 \cite{B2}, $\omega=\omega^{(0)}$
is {\it fixed}, and will not budge. However, we observe that $\det T_{+-}$ is of the form
$$D:=\det T_{+-}=a+\sum _{k<k'}b_{kk'}\omega_k\omega_{k'},$$ 
where $a$, $b_{kk'}\in\mathbb Z$. (There is no quartic in $\omega$ term due to the wedge product.)   
So assume (*) holds, $D=0$, if and only if
$a=b_{kk'}=0$, for all pairs, $k$ and $k'$ ($k<k'$). For example, when 
$k, k'\in \{1,2\}$, this leads to two equations; and when $k, k'\in \{1,2,3\}$,
4 equations. In two dimensions, this suffices to make variable reductions to reach a contradiction 
on the sub-module $\mathcal U$ using Lemma~1.1. So $D\neq 0$. The result 
in \cite{Schm1} on simultaneous Diophantine approximations then gives the
desired lower bound. Subsequently a simple argument gives also a lower bound 
in the neighborhood of $\omega^{(0)}$, cf. the linear case in Lemma~5.3 \cite{W2}. 

In three dimensions and above, the above strategy to prove $D\neq 0$
meets a difficulty, namely there are more unknowns
than equations when the $\nu$ have small support. For example,
when $d=3$, for $\nu$ with less than $6$ non-zero components.
For $\nu$ with larger support, the same proof works. Similarly for $d>3$.

\bigskip

We prove the new separation lemma in the next section, and conclude by translating the
separation lemma at small scales, Lemma~5.2 \cite{W2} into the present language.

\section{Proof of the new separation lemma}

Given $\omega\in\mathbb R^b$,
for $\xi=(\nu, \eta)\in\mathbb Z^{b+d}$,  
define the operators $T_\pm$ from $\mathbb R^{b+d}\to \mathbb R^{1+d}$
as mentioned before:
\begin{equation}\label{Tpm}
T_\pm \xi=T_\pm (\nu, \eta)=(\nu\cdot\omega, \pm\eta).
\end{equation}
 Let $\omega^{(0)}_k=\sqrt{j_k^2+1}$, for $k=1, 2, ..., b$, and assume (*) is satisfied. 
Below we specialize to $d=2$.

\subsection{Non-zero generalized Gram determinants at $\omega=\omega^{(0)}$} 
\begin{lemma} For $0\neq\xi=(\nu, \eta)\in\mathcal U\subset \mathbb Z^{b+2}$, when $\omega=\omega^{(0)}$,
\begin{equation}\label{1x1+}
T_+\xi T_+\xi=(\nu\cdot\omega)^2+\eta^2\neq 0;
\end{equation}
and 
\begin{equation}\label{1x1}
T_+\xi T_-\xi=(\nu\cdot\omega)^2-\eta^2\neq 0.
\end{equation}
\end{lemma}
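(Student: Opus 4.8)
The plan is to reduce both inequalities to the structural fact, Lemma~1.1, that a vector $(\nu,\eta)\in\mathcal U$ with $\nu=0$ must be the zero vector, combined with the arithmetic hypothesis (*) on the prime decompositions of $j_k^2+1$. First I would dispose of the case $\nu=0$: by Lemma~1.1 this forces $\eta=0$, contradicting $\xi\neq 0$, so we may assume $\nu\neq 0$. Then $T_+\xi T_+\xi=(\nu\cdot\omega^{(0)})^2+\eta^2$ is a sum of two squares, so it vanishes only if $\nu\cdot\omega^{(0)}=0$; thus \eqref{1x1+} follows once we show $\nu\cdot\omega^{(0)}\neq 0$ for $0\neq\nu\in\Z^b$. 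For \eqref{1x1}, the quantity $(\nu\cdot\omega^{(0)})^2-\eta^2=(\nu\cdot\omega^{(0)}-\eta\cdot e)(\nu\cdot\omega^{(0)}+\eta\cdot e)$ — here I only need that $(\nu\cdot\omega^{(0)})^2$ is not an integer, since $\eta^2=|\eta|^2\in\Z$; so \eqref{1x1} also reduces to showing $(\nu\cdot\omega^{(0)})^2\notin\Z$, in particular $\nu\cdot\omega^{(0)}\neq 0$.

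So the heart of the matter is the claim: for $0\neq\nu=\sum_k C_k e_k\in\Z^b$, the real number $\nu\cdot\omega^{(0)}=\sum_{k=1}^b C_k\sqrt{j_k^2+1}$ is irrational (hence nonzero, and its square is a non-integer). This is where hypothesis (*) enters. The standard approach is a linear-independence statement over $\Q$ for square roots: writing $j_k^2+1=m_k^2 q_k$ with $q_k$ square-free, condition (*) guarantees that each $q_k$ is divisible by a prime $p_k$ that divides no other $q_{k'}$ (the primes occurring to odd order are exactly the primes dividing the square-free part). A classical theorem — that $\sqrt{q_1},\dots,\sqrt{q_b}$ together with $1$ are linearly independent over $\Q$ whenever the $q_k$ are distinct square-free integers $>1$ — then shows $\sum_k C_k\sqrt{j_k^2+1}=\sum_k (C_k m_k)\sqrt{q_k}$ cannot be a nonzero rational unless all $C_k m_k=0$, i.e. all $C_k=0$. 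I would either cite this linear-independence fact or give the short induction: if $\sum_k C_k m_k\sqrt{q_k}=r\in\Q$, isolate one term, square, and use the prime $p_b$ distinguishing $q_b$ to force its coefficient to vanish, then induct. The distinctness of the $q_k$ needed here is precisely what (*) provides — distinct distinguished primes $p_k$ force distinct square-free parts $q_k$.

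The main obstacle is making the linear-independence argument clean: one must be careful that (*) talks about primes in \emph{odd} powers rather than directly about the $q_k$ being distinct, and that the coefficients $C_k$ range over all of $\Z$, including the possibility $C_k m_k=0$ with $C_k\neq 0$ — which cannot happen since $m_k\neq 0$. Once the irrationality of $\nu\cdot\omega^{(0)}$ is in hand, both displays follow immediately: \eqref{1x1+} because a sum of two squares of reals is zero only when both vanish, and \eqref{1x1} because $(\nu\cdot\omega^{(0)})^2$ is irrational while $\eta^2$ is an integer, so their difference is irrational, in particular nonzero. I do not expect the case analysis to be long; essentially everything rests on Lemma~1.1 plus the number-theoretic independence of the $\sqrt{j_k^2+1}$ guaranteed by~(*).
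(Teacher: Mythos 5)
Your treatment of \eqref{1x1+} is fine, but your reduction of \eqref{1x1} has a genuine gap: you assert that $\nu\cdot\omega^{(0)}$ irrational implies $(\nu\cdot\omega^{(0)})^2\notin\mathbb Z$, which is false. Precisely when $\nu$ is supported on a single index, say $\nu=\nu_1e_1$ with $\nu_1\neq0$, the number $\nu_1\sqrt{j_1^2+1}$ is irrational yet its square $\nu_1^2(j_1^2+1)$ is an integer, so $(\nu\cdot\omega^{(0)})^2-\eta^2$ is an integer and your irrationality argument says nothing about whether it vanishes. This case is not vacuous, and it is exactly where the hypothesis $\xi\in\mathcal U$ must be used beyond the exclusion of $\nu=0$: on the sub-module one has $\eta=-\nu_1j_1$, hence $\eta^2=\nu_1^2j_1^2$ and $(\nu\cdot\omega^{(0)})^2-\eta^2=\nu_1^2(j_1^2+1)-\nu_1^2j_1^2=\nu_1^2\neq0$, which is how the paper closes this case. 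Without the sub-module relation, \eqref{1x1} is simply false on all of $\mathbb Z^{b+2}$: take $j_1=(1,0)$, $\nu=e_1$, $\eta=(1,1)$, so that $(\nu\cdot\omega^{(0)})^2-\eta^2=2-2=0$.

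For $|\mathrm{supp}\,\nu|\geq2$ your idea does work, but it must be run on the square rather than on $\nu\cdot\omega^{(0)}$ itself: expanding, $(\nu\cdot\omega^{(0)})^2$ is an integer plus $2\sum_{k<k'}\nu_k\nu_{k'}\omega_k^{(0)}\omega_{k'}^{(0)}$, and under (*) the numbers $1$ and $\omega_k^{(0)}\omega_{k'}^{(0)}$ ($k<k'$) are rationally independent, so the expression is irrational and cannot equal the integer $\eta^2$; this is the paper's argument. (Note that the relevant independence is that of the \emph{products} $\omega_k^{(0)}\omega_{k'}^{(0)}$, governed by the symmetric differences $\pi_k\triangle\pi_{k'}$, not of the $\sqrt{q_k}$ themselves --- your stronger reading of (*), with each $p_k$ private to $j_k^2+1$, is indeed what is needed here.) So the repair is: split on $|\mathrm{supp}\,\nu|$, use the cross-term irrationality when the support has at least two elements, and use the sub-module identity $\eta=-\nu_1j_1$ when it is a singleton.
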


\begin{proof}
Eq. \eqref{1x1+} clearly holds for non-zero $\xi$ when $\eta\neq 0$, and when $\eta=0$, 
using $\nu\neq 0$ and the rational independence of 
$\omega_{k}$, $k=1, 2, ..., b$.
Set $$D= T_+\xi T_-\xi.$$ 
If $\nu$ has two or more non-zero components, $D$ contains 
one or more irrational terms using (*). The rational independence of 
$1$, $\omega_{k}\omega_{k'}$, $k<k'$, $k$, $k'=1, 2, ..., b$, gives 
$$D\neq 0.$$
If $\nu$ has only one non-zero component,  (without loss) say $\nu_1$, then on the sub-module $\eta=\nu_1j_1$.
So $$D=\nu_1^2\omega_1^2-\nu_1^2j_1^2=\nu_1^2\neq 0.$$
\end{proof}

We proceed to the $2\times 2$ matrices. 

\begin{lemma} Assume $\xi=(\nu, \eta), \xi'=(\nu', \eta') \in\mathbb Z^{b+2}$,
$\text{dim }\{\xi, \xi'\}=2$, and $\text{dim }\{\eta, \eta'\}=1$.
When $\omega=\omega^{(0)}$, the $2\times2$ matrices:
\begin{equation}\label{defT++}
T_{++}=\begin{pmatrix} T_+\xi T_+\xi&T_+\xi T_+{\xi'}\\
 T_+{\xi'}T_+\xi&T_+{\xi'}T_+{\xi'}\\
 \end{pmatrix},
\end{equation}
and 
\begin{equation}\label{defT+-}
T_{+-}=\begin{pmatrix} T_+\xi T_-\xi&T_+\xi T_-{\xi'}\\
 T_+{\xi'}T_-\xi&T_+{\xi'}T_-{\xi'}\\
 \end{pmatrix},
\end{equation}
satisfy 
\begin{equation}
\det T_{++}\neq 0,
\end{equation}
and 
\begin{equation}\label{tpm}
\det T_{+-}\neq 0.
\end{equation}
\end{lemma}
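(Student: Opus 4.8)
The plan is to reduce both determinant bounds to the rational independence of $1$ and the products $\omega_k\omega_{k'}$ ($k<k'$) guaranteed by hypothesis (*), exactly as in the $1\times 1$ case of the previous lemma, but now with the extra constraint $\text{dim}\{\eta,\eta'\}=1$ doing the combinatorial work. First I would expand $\det T_{+-}$. Writing $\xi=(\nu,\eta)$, $\xi'=(\nu',\eta')$ and abbreviating $s=\nu\cdot\omega$, $s'=\nu'\cdot\omega$, one has
\begin{equation*}
\det T_{+-}=(s^2-\eta^2)(s'^2-\eta'\cdot\eta')-(ss'-\eta\cdot\eta')^2
= s^2(\eta\cdot\eta')\,\frac{?}{} \ \cdots
\end{equation*}
— more cleanly, using the bilinearity of the Lorentzian form $\langle T_+x,T_-y\rangle=(\nu\cdot\omega)(\nu'\cdot\omega)-\eta\cdot\eta'$, the $2\times 2$ Gram determinant equals $-\bigl(\langle T_+\xi,T_-\xi'\rangle^2-\langle T_+\xi,T_-\xi\rangle\langle T_+\xi',T_-\xi'\rangle\bigr)$, which after cancellation of the pure quartic-in-$s$ terms becomes a quantity of the advertised shape $D=a+\sum_{k<k'}b_{kk'}\omega_k\omega_{k'}$ with $a,b_{kk'}\in\Z$. (There is no $\omega^4$ term because the $(\nu\cdot\omega)^2(\nu'\cdot\omega)^2$ contributions cancel.) Likewise $\det T_{++}$ has the form $a+\sum_{k<k'}b_{kk'}\omega_k\omega_{k'}$ with the sign of the $\eta$-terms flipped.

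Next, assuming for contradiction that $D=0$, hypothesis (*) forces $a=0$ and $b_{kk'}=0$ for every pair $k<k'$. I would first record what the $b_{kk'}=0$ equations say. The coefficient of $\omega_k\omega_{k'}$ in the expansion is, up to sign, $\nu_k\nu'_{k'}+\nu_{k'}\nu'_k$ for $k\ne k'$ — i.e. the entries of the "symmetrized" outer product of $\nu$ and $\nu'$ vanish off-diagonal — while the diagonal $\omega_k^2$ coefficients have already cancelled by construction. Combined with $\text{dim}\{\xi,\xi'\}=2$ (so $\nu,\nu'$ are not both zero and $(\nu,\nu')$ is not a rank-one pair in a trivializing way) and with $\dim\{\eta,\eta'\}=1$, this should pin down the support structure of $\nu$ and $\nu'$: in $d=2$ the single linear relation between $\eta$ and $\eta'$ plus the vanishing of the cross terms leaves too little freedom, and I expect to land in one of a small number of cases — either $\nu$ and $\nu'$ have disjoint one-element supports, or they share support, or one of them is $0$. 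In each case I pull back to the sub-module relation $\eta=\sum_k \nu_k j_k$, $\eta'=\sum_k \nu'_k j_k$, use $\dim\{\eta,\eta'\}=1$, and invoke Lemma~1.1 (if the $\Z^b$-part of an element of $\mathcal U$ vanishes then so does its $\Z^2$-part) to collapse the remaining unknowns and contradict $0\ne\xi\ne\xi'$ or $\dim\{\xi,\xi'\}=2$. The $a=0$ equation supplies the final numerical contradiction, e.g. forcing some $\nu_1^2$-type quantity to be simultaneously zero and nonzero, just as in the $1\times 1$ argument. The argument for $\det T_{++}\ne 0$ is the same bookkeeping with all $\eta$-signs positive and is, if anything, easier since no indefinite cancellation can occur among the $\eta$-terms.

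The main obstacle I anticipate is the case analysis in the middle step: making precise how $\dim\{\eta,\eta'\}=1$ together with the vanishing of the off-diagonal coefficients $\nu_k\nu'_{k'}+\nu_{k'}\nu'_k$ constrains the joint support of $\nu$ and $\nu'$, and checking that every surviving case genuinely contradicts either $\xi\ne 0$, $\xi\ne\xi'$, $\dim\{\xi,\xi'\}=2$, or the generating relation on $\mathcal U$. This is exactly the place where, as the introduction warns, $d=2$ is essential — in $d\ge 3$ there would be more free components in $\nu,\nu'$ than vanishing-coefficient equations and the reduction would fail — so the proof must use $d=2$ decisively, presumably by noting that $\dim\{\eta,\eta'\}=1$ in $\Z^2$ is a single scalar relation that, paired with the cross-term equations, over-determines the pair $(\nu,\nu')$. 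Everything else (the explicit determinant expansion, the extraction of coefficients, the invocation of rational independence from (*), and the final appeal to Lemma~1.1) is routine.
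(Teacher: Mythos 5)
Your overall plan (expand the determinant, split it into a rational part plus $\sum_{k<k'}b_{kk'}\omega_k\omega_{k'}$, and force all coefficients to vanish) could in principle be completed, but as written there is a genuine gap at the central step, and you also import hypotheses this lemma does not grant. The decisive hypothesis $\dim\{\eta,\eta'\}=1$ is never actually deployed where it matters. The paper's proof first disposes of the case $\eta$ or $\eta'=0$ (where the determinant collapses to $\eta^2(\nu'\cdot\omega)^2$), and otherwise substitutes $\eta'=c\eta$, under which the determinant factors completely: $\det T_{+-}=-\eta^2\left[c(\nu\cdot\omega)-(\nu'\cdot\omega)\right]^2$. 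Its vanishing forces $(c\nu-\nu')\cdot\omega=0$, hence $\nu'=c\nu$ by rational independence of $1,\omega_1,\dots,\omega_b$ (only the \emph{linear} independence is needed, not the quadratic products), contradicting $\dim\{\xi,\xi'\}=2$; the $T_{++}$ case is handled the same way since $T_{++}$ is an honest Euclidean Gram matrix. Your proposal instead defers the entire content to ``this should pin down the support structure\dots I expect to land in one of a small number of cases,'' which is exactly the part that needs proving. Moreover your stated coefficient of $\omega_k\omega_{k'}$, namely $\nu_k\nu'_{k'}+\nu_{k'}\nu'_k$, is incorrect: the actual coefficient is $-2\eta'^2\nu_k\nu_{k'}-2\eta^2\nu'_k\nu'_{k'}+2(\eta\cdot\eta')(\nu_k\nu'_{k'}+\nu_{k'}\nu'_k)$, which with $\eta'=c\eta$ equals $-2\eta^2(c\nu_k-\nu'_k)(c\nu_{k'}-\nu'_{k'})$; and the diagonal $\omega_k^2$ terms do not ``cancel by construction'' --- they contribute the integers $-\eta^2(c\nu_k-\nu'_k)^2(j_k^2+1)$ to the rational part $a$.

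Two of your intended tools are also unavailable here. The lemma is stated for arbitrary $\xi,\xi'\in\mathbb Z^{b+2}$, not for elements of $\mathcal U$ --- the remark immediately after the lemma stresses exactly this --- so you may not use the relations $\eta=\sum_k\nu_kj_k$ or Lemma~1.1; those enter only in the Main Lemma, where $\dim\{\eta,\eta'\}=2$. Relatedly, you have misplaced the role of $d=2$: this lemma holds in every dimension (with the $\eta$'s spanning $d-1$ dimensions), and two-dimensionality is essential only for the Main Lemma. If you wish to keep the coefficient-vanishing route, the repair is to compute the coefficients correctly as above; their simultaneous vanishing reads $(c\nu_k-\nu'_k)(c\nu_{k'}-\nu'_{k'})=0$ off-diagonal and $\sum_k(c\nu_k-\nu'_k)^2(j_k^2+1)=0$ for the rational part, which forces $c\nu=\nu'$ directly --- but at that point you have merely rederived the paper's factorization term by term.
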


\begin{proof} If $\eta$ or $\eta'=0$, without loss, one may assume $\eta'=0$, 
then since $\nu'\neq 0$,
$$\det T_{++}=\eta^2(\nu'\cdot\omega)^2\neq 0,$$
using the rational independence of $1$ and $\omega_{k}\omega_{k'}$, $k<k'$, $k$, $k'= 1, 2, ..., b.$ 
If $\eta\neq 0$ and $\eta'\neq 0$, and
$$\text{dim }\{(\nu\cdot\omega, \eta), (\nu'\cdot\omega, \eta')\}=1,$$
then since $\text{dim }\{\eta, \eta'\}=1$,
$\eta'=c\eta$, $c\neq 0$. So 
$$\nu'\cdot\omega=c\nu\cdot\omega.$$
So $$\nu'=c\nu,$$ using the rational independence of $1$, and $\omega_k$, $k=1, 2, ..., b$.
This contradicts $\text{dim }\{\xi, \xi'\}=2$,
so $$\det T_{++}\neq 0.$$

We now prove \eqref{tpm}.  If $\eta$ or $\eta'=0$, then the same argument as for $T_{++}$ shows
$$\det T_{+-}\neq 0.$$
If $\eta\neq 0$ and $\eta'\neq 0$, then $\eta'=c\eta$, $c\neq 0$.
Using this to compute the determinant, we obtain
$$\det T_{+-}=-\eta^2[c(\nu\cdot\omega)-(\nu'\cdot\omega)]^2.$$
So $\det T_{+-}=0$ if and only if $\nu'=c\nu$. So
$$(\nu', \eta')=c(\nu, \eta),$$
contradicting 
$$\text{dim }\{(\nu, \eta), (\nu', \eta')\}=2.$$
So \eqref{tpm} holds.
\end{proof}

\noindent{\it Remark.} Note that the above lemma holds on $\mathbb Z^{b+2}$, and not just on the sub-module $\mathcal U$. In fact 
this lemma, with obvious modifications, i.e., the $\xi's$ span $d$ dimensions; while the $\eta's$ $(d-1)$ dimensions, holds in any dimension $d$.   

\smallskip

We now come to the crux of the matter, the lemma whose proof uses $d=2$.
\begin{lemmam} Assume $\xi=(\nu, \eta), \xi'=(\nu', \eta') \in\mathcal U\subset\mathbb Z^{b+2}$,
$\text{dim }\{\xi, \xi'\}=2$, and $\text{dim }\{\eta, \eta'\}=2$.
When $\omega=\omega^{(0)}$, the $2\times2$ matrices as defined in \eqref{defT++} and \eqref{defT+-}
satisfy 
\begin{equation}\label{++}
\det T_{++}\neq 0,
\end{equation}
and 
\begin{equation}\label{+-}
\det T_{+-}\neq 0.
\end{equation}
\end{lemmam}

\begin{proof} Since $\text{dim }\{\eta, \eta'\}=2$, \eqref{++} is obviously true. We only need to prove \eqref{+-}.

Given $\nu\in\mathbb Z^b$, we call $\text{supp }\nu$, the set in $i$, $i=1, 2, ..., b$,
such that $\nu_i\neq 0$. Without loss, there are two cases:

\smallskip

\noindent {\bf (A)} $\text{supp }\nu\not\supseteq\text{ supp }\nu';$

and 

\noindent {\bf (B)} $\text{supp }\nu\supseteq\text{ supp }\nu'.$

\bigskip

\noindent {\bf Case (A)}: Direct computation gives
\begin{equation}\label{det}
\det T_{+-}= -{\eta'}^2(\nu\cdot\omega)^2-{\eta}^2(\nu'\cdot\omega)^2+\eta^2{\eta'}^2+2(\nu\cdot\omega)(\nu'\cdot\omega)\eta\cdot\eta'-(\eta\cdot\eta')^2.
\end{equation}
If $\eta\cdot\eta'\neq 0$, then clearly 
$$(\nu\cdot\omega)(\nu'\cdot\omega)$$ contains a square root term not in $(\nu\cdot\omega)^2$ or $(\nu'\cdot\omega)^2$, so \eqref{+-} holds.
If $\eta\cdot\eta'=0$, $|\text{supp }\nu|$ or $|\text{supp }\nu'|\geq 2$, without loss, assume $|\text{supp }\nu|\geq 2$, then $(\nu\cdot\omega)^2$ contains a square root not in $(\nu'\cdot\omega)^2$,
so \eqref{+-} holds. Finally if $|\text{supp }\nu|=1$ and  $|\text{supp }\nu'|=1$, then, without loss, one may assume 
$$\nu=(\nu_1, 0, 0, .., 0), \, \nu_1\neq 0$$ 
and 
$$\nu'=(0, \nu'_2, 0, .., 0), \, \nu'_2\neq 0.$$ 
Using this in \eqref{det} gives
$$\det T_{+-}=-\nu_1^2{\nu_2'}^2(j_1^2j_2^2+j_1^2+j_2^2)\neq 0.$$

\bigskip
We now proceed to case (B), it is further divided into three subcases: 

\smallskip

\noindent {\bf Case (B1)}: $|\text{supp }\nu|=1$.

In this case, $\nu'=c\nu$, so $\eta'=c\eta$ by using Lemma~1.1,
which contradicts 
$\text{dim }\{(\nu, \eta), (\nu', \eta')\}=2$. So \eqref{+-} holds.

\bigskip

\noindent {\bf Case (B2)}: $|\text{supp }\nu|=2$.

Without loss, one may write 
$$\nu=(\nu_1, \nu_2, 0, .., 0), \, \nu_1\neq 0, \nu_2\neq 0,$$ 
and 
$$\nu'=(\nu'_1, \nu'_2, 0, .., 0), \, \nu'_2\neq 0.$$ 

We compute the terms that enter in the matrix $T_{+-}$. Write
$$\sum_{i=1}^b \nu_ij_i=\nu\cdot j,$$
$$x=\nu_1/\nu_2,$$
and 
$$y=\nu'_1/\nu'_2.$$
We have
\begin{equation}\label{nu}
(\nu\cdot\omega)^2-\eta^2=(\nu\cdot\omega)^2-(\nu\cdot j)^2
=\nu_2^2(1+x^2-2xj_1\cdot j_2+2x\omega_1\omega_2),
\end{equation}
and
\begin{equation}\label{nunu'}
(\nu\cdot\omega)(\nu'\cdot\omega)-(\nu\cdot j)(\nu'\cdot j)
=\nu_2\nu'_2[1+xy-(x+y)j_1\cdot j_2+(x+y)\omega_1\omega_2].
\end{equation}
Write $$A=j_1\cdot j_2,$$ and $$\Omega=\omega_1\omega_2.$$
If $\det T_{+-}=0$, then 
\begin{equation}\label{det0}
(1+x^2-2Ax+2x\Omega)(1+y^2-2Ay+2y\Omega)
=[1+xy-(x+y)A+(x+y)\Omega]^2.
\end{equation}
Equate the irrational part gives:
$$y(1+x^2-2Ax)+x(1+y^2-2Ay)=[1+xy-(x+y)A](x+y),$$
which yields
$$A(x-y)^2=0.$$
If $A\neq 0$, then $x=y$,
i.e., $\nu_1/\nu_2=\nu'_1/\nu'_2$. So $\nu'=c\nu$ and 
$\eta'=c\eta$, using Lemma~1.1, which contradicts
$$\text{dim }\{(\nu, \eta), (\nu', \eta')\}=2.$$
If $A=0$, equate the rational part of \eqref{det0} gives:
$$\aligned &(1+x^2-2Ax)(1+y^2-2Ay)+4xy\Omega^2 \\
=&(1+x^2)(1+y^2)+4xy\Omega^2\\
=&[1+xy-(x+y)A]^2+(x+y)^2\Omega^2\\
=&[1+xy]^2+(x+y)^2\Omega^2.\endaligned$$
This leads to
$$(\Omega^2-1)(x-y)^2=0.$$
Since $$\Omega^2=\omega_1^2\omega_2^2=(j_1^2+1)(j_2^2+1)>1,$$
this gives again $x=y$, which is a contradiction. This concludes the proof of \eqref{+-}
for case (B2). 

\bigskip

\noindent {\bf Case (B3)}: $|\text{supp }\nu|\geq 3$

We start from 

\noindent (i) $|\text{supp }\nu|=3$

Without loss, one may write 
$$\nu=(\nu_1, \nu_2, \nu_3, 0, .., 0), \, \nu_1\neq 0, \nu_2\neq 0, \nu_3\neq 0,$$ 
and 
$$\nu'=(\nu'_1, \nu'_2, \nu'_3, 0, .., 0), \, \nu'_3\neq 0.$$ 
Write
\begin{equation}\label{g}
g=\begin{pmatrix} \eta\cdot\eta&\eta\cdot\eta' \\
 \eta'\cdot\eta&\eta'\cdot\eta'\\
 \end{pmatrix}:=\begin{pmatrix} g_{\nu\nu}&g_{\nu\nu'} \\
g_{\nu'\nu}&g_{\nu'\nu'}\\
 \end{pmatrix},
\end{equation}
$g$ is a positive definite Euclidean metric. Denote the inverse by $g^{-1}$, which again is a positive definite  Euclidean metric.
Denote its matrix elements by
$$g^{\nu\nu}, g^{\nu\nu'}, 
g^{\nu'\nu} \text{ and } g^{\nu'\nu'}.$$
Let 
$$D= -\det T_{+-}/\det g.$$
Then simple computation gives
$$\aligned D=g^{\nu\nu}&[\nu_1^2\omega_1^2+\nu_2^2\omega_2^2+\nu_3^2\omega_3^2+2\nu_1\nu_2\omega_1\omega_2+2\nu_2\nu_3\omega_2\omega_3+2\nu_3\nu_1\omega_3\omega_1]\\
+g^{\nu'\nu'}&[{\nu'_1}^2\omega_1^2+{\nu'_2}^2\omega_2^2+{\nu'_3}^2\omega_3^2+2\nu'_1\nu'_2\omega_1\omega_2+2\nu'_2\nu'_3\omega_2\omega_3+2\nu'_3\nu'_1\omega_3\omega_1]\\
-2g^{\nu\nu'}&[\nu_1\nu'_1\omega_1^2+\nu_2\nu'_2\omega_2^2+\nu_3\nu'_3\omega_3^2+(\nu_1\nu'_2+\nu'_1\nu_2)\omega_1\omega_2+(\nu_2\nu'_3+\nu'_2\nu_3)\omega_2\omega_3+(\nu_3\nu'_1+\nu'_3\nu_1)\omega_3\omega_1]\\
-1\endaligned$$
If $D=0$, the irrational part of the equation gives the following system of three equations:
\begin{equation}\label{system}
\begin{cases}
\nu_1\nu_2g^{\nu\nu}+\nu'_1\nu'_2g^{\nu'\nu'}-(\nu_1\nu'_2+\nu'_1\nu_2)g^{\nu\nu'}=0,\\
\nu_2\nu_3g^{\nu\nu}+\nu'_2\nu'_3g^{\nu'\nu'}-(\nu_2\nu'_3+\nu'_2\nu_3)g^{\nu\nu'}=0,\\
\nu_3\nu_1g^{\nu\nu}+\nu'_3\nu'_1g^{\nu'\nu'}-(\nu_3\nu'_1+\nu'_3\nu_1)g^{\nu\nu'}=0.
\end{cases}
\end{equation}

\noindent (ia) $g^{\nu\nu'}\neq 0$

Viewing $g^{\nu\nu}$, $g^{\nu\nu'}$, $g^{\nu'\nu'}$ as the variables $x$, $y$ and $z$, say,
then $x=y=z=0$ is a solution. Denote the coefficient matrix of the system by $\mathcal V$, then
there are non-zero solutions if and only if $$\det\mathcal V=0.$$ This leads to the existence
of $\alpha$ and $\beta$, $(\alpha, \beta)\neq 0$ such that 
$$
\begin{cases}
\nu_1\nu_2=\alpha \nu_2\nu_3+\beta \nu_3\nu_1,\\
\nu'_1\nu'_2=\alpha \nu'_2\nu'_3+\beta \nu'_3\nu'_1,\\
\nu_1\nu'_2+\nu'_1\nu_2=\alpha(\nu_2\nu'_3+\nu'_2\nu_3)+\beta(\nu_3\nu'_1+\nu'_3\nu_1),
\end{cases}
$$
which leads to either  
\begin{equation}\label{1}
\nu_1/\nu_3=\nu'_1/\nu'_3,
\end{equation}
or 
\begin{equation}\label{2}
\nu_2/\nu_3=\nu'_2/\nu'_3.
\end{equation}
If $\nu'_1=0$ and  $\nu'_2=0$, this is a contradiction; else substituting
 \eqref{1} in the third equation 
in \eqref{system} leads to
$$\nu_3^2g^{\nu\nu}+{\nu'_3}^2g^{\nu'\nu'}-2\nu_3{\nu'_3}^2g^{\nu\nu'}
=(v, g^{-1}v)=0,$$
where $v:=(\nu_3, -\nu'_3)\neq 0$. Since $g^{-1}$ is a positive definite metric, as mentioned earlier, below \eqref{g},
this is a contradiction; or else  substituting \eqref{2} in the second equation 
in \eqref{system} leads to the same contradiction with $v:=(\nu_2, -\nu'_2)\neq 0$.
So $$\det T_{+-}\neq 0.$$

\noindent (ib) $g^{\nu\nu'}=0$

One may assume $|\text {supp }\nu'|=|\text {supp }\nu|=3$, otherwise clearly it is a contradiction.
Subsequently one deduces from \eqref{system} that 
$$\nu'_1/\nu_1=\nu'_2/\nu_2=\nu'_3/\nu_3.$$
So $$\nu=c\nu',$$
which yields
$$\eta=c\eta'$$
from Lemma~1.1, which contradicts 
$$\text{dim } \{\eta, \eta'\}=2.$$ So again
$$\det T_{+-}\neq 0.$$

\noindent (ii) $|\text{supp }\nu|>3$

Clearly without loss, one may again write 
$$\nu=(\nu_1, \nu_2, \nu_3, \nu_4, ..., \nu_b), \, \nu_1\neq 0, \nu_2\neq 0, \nu_3\neq 0,  \nu_4\neq 0\, ... $$ 
and 
$$\nu'=(\nu'_1, \nu'_2, \nu'_3,  \nu'_4, ..., \nu'_b), \, \nu'_3\neq 0.$$ 
The linear system obtained from the irrational part of the equation contains \eqref{system} as a sub-system. 
When $g^{\nu\nu'}\neq 0$, the same argument applies and leads to $\det T_{+-}\neq 0$. When $g^{\nu\nu'}=0$,
if $|\text {supp }\nu'|<|\text {supp }\nu|$, it is a clear contradiction, otherwise
it leads to 
$$\nu_1'/\nu_1=\nu'_2/\nu_2=\nu'_3/\nu_3=\nu'_4 /\nu_4\,  ... ,$$
and the same conclusion holds. This finishes the proof.
\end{proof}

\subsection{Proof of the New Separation Lemma}\hfill

\noindent {\it (A) Proof of lower bounds on $\det T_{++}$ and $|\det T_{+-}|$ at 
$\omega$} 

\noindent (i) The $2\times 2$ case

We begin with $T_{+-}$. We have
$$\aligned D:&=\det T_{+-}\\
&=-[\eta(\nu'\cdot\omega)^2-\eta'(\nu\cdot\omega)]^2+\eta^2{\eta'}^2-(\eta\cdot\eta')^2\\
&=P(\omega^{(0)})+\delta P_1(\omega'; \omega^{(0)})+\delta^2P_2(\omega')\\
:&=P+\mathcal P,\endaligned $$
where $P_2$ is a quadratic polynomial in $\omega'$ with integer coefficients and $P_1$ linear,
and $|\mathcal P|<\delta B^4$.  Further
$$\frac{\partial^2P_2}{\partial\omega_k^2}=-2\Vert [\eta\nu'_k-\eta'\nu_k]\Vert ^2,\, k=1, 2, ..., b.$$
So there exists $k$, such that 
$$\frac{\partial^2P_2}{\partial\omega_k^2}\neq 0,$$
otherwise it contradicts with $\text{dim }\{(\nu, \eta), (\nu'\eta')\}=2$ on the sub-module $\mathcal U$,
therefore 
\begin{equation}\label{pos}
|\frac{\partial^2P_2}{\partial\omega_k^2}|\geq 2,
\end{equation}
for some $k$. 

From Lemmas~2.1, 2.2 and the Main Lemma, 
$$P\neq 0.$$ Assume $|(\nu, \eta)|$ and $|(\nu', \eta')|\leq N$, $N\in [1, B]$. The simultaneous Diophantine
approximation result from \cite{Schm1}, see pp 151-155 \cite{Schm2}, then gives that
$$|P|\geq \frac{1}{N^{\gamma}}, \,\gamma>0.$$
Clearly $$|\mathcal P|<\delta N^4.$$
So for $N$ such that 
$$\delta N^4<\frac{1}{N^{2\gamma}},$$
$$|D|\geq \frac{1}{N^{\gamma}}-\frac{1}{N^{2\gamma}} >\frac{1}{2N^{\gamma}}.$$

At scale $N$, the number of quadratic polynomials in $\omega$ from $D$, are bounded above by 
\begin{equation}\label{number}
[N^4]^{(b+C_b^2+b+1)}<N^{4b^2}.
\end{equation}
Set 
\begin{equation}\label{q}
q=4(2\gamma+4+2b^2+1).
\end{equation}
If \begin{equation}\label{delta}
\delta N^4\geq\frac{1}{N^{2\gamma}},
\end{equation}
then 
$$|D|\geq \frac{1}{N^q},$$ holds at all  scales $N$,
away from a set in $\omega'$ of measure less than $\delta^{1+\epsilon}$, $\epsilon>0$,
by summing over $N$ and using \eqref{pos}, \eqref{number} and \eqref{delta}. Similar arguments
yields the same bound for the $T_{++}$. 

\smallskip

\noindent (ii) The $1\times 1$ case

$$\aligned D&=(\nu\cdot\omega)^2-\eta^2\\
&=(\nu\cdot\omega^{(0)})^2-\eta^2+2\delta (\nu\cdot\omega^{(0)})(\nu\cdot\omega)+\delta^2(\nu\cdot\omega')^2.\endaligned$$
Since $\nu$ has at least one non-zero component, say $\nu_1$, then
$$\frac{\partial^2 D}{\partial \omega_1^2}=2\delta^2\nu_1^2\geq 2\delta^2.$$
So we obtain the bounds on $\det T_{++}$, and likewise $\det T_{+-}$, as in the $2\times 2$ case.
 \hfill$\square$ 

\smallskip

 
   \noindent{\it (B) Proof of the New Separation Lemma}
  
\noindent The lower bounds in (i) and (ii) put us in the same setting as that of Lemma~20.14 \cite{B2}, and 
the proof there is directly applicable. (In fact it is the same.) So let us 
sketch the idea of the proof instead. 

Let $I=\{0, 1, 2, ..., \ell\}$. Using the condition \eqref{5.17}, the number of elements in the set $I$ can be bounded in terms of the 
diameter: $$\rho=\max \Vert \eta_i-\eta_k\Vert, \, i, k \in I.$$ Since 
$$\Vert \eta_i-\eta_k\Vert\leq \Vert T_+(\xi_i-\xi_k)\Vert:=\Vert T_+\zeta\Vert, $$
the idea is to bound $T_+\zeta$ by its projection onto an appropriate basis. 

Now the restriction to the hyperbolic tube: 
$$|(n_i\cdot\omega+\theta)^2-h_i^2-1|<1,$$
entails an upper bound:
$$\Vert T_+\xi_i\cdot T_-\Delta_{i'}\xi\Vert<M,$$
where $\Delta_{i'}\xi=\xi_{i'}-\xi_{i'-1}$.  Using the lower bound on $\det T_{+-}$, and the 
upper bound $M$, then leads to an upper bound:
$$\Vert T_+\zeta\Vert<\mathcal M,$$
provided the $T_-\Delta_{i'}\xi$ form a basis. Consequently 
$$\rho=\max \Vert \eta_i-\eta_k\Vert<\mathcal M.$$

Achieving this basis yields dimension reduction from $d_1\leq d$ to $d_1=1$. The upper bound
$\mathcal M$ depends on $d_1$, the lower bound on $\det T_{+-}$, and the number of $\Delta_{i}\xi$ needed, so that $T_-\Delta_{i}\xi$ form a basis.
When $d_1$ is reduced to $d_1=1$, this number is $1$, which concludes the proof.
 \hfill$\square$

\subsection{About Lemma~5.2 \cite{W2}} In \cite{W2}, we proved a separation lemma on the characteristic: 
$$(n\cdot\omega^{(0)}+\theta)^2-h^2-1=0,$$
valid in arbitrary dimensions $d$. It needs, however, the additional condition: 

\noindent (**) If $b>d+1$, any $d$ vectors in the set $\{j_k\}_{k=1}^b$ are linearly independent. 
For all $k$, $k=1, 2, ..., b$, define the set 
$$J_k=\{j_{k'}-j_k|k'=1, ..., b, k'\neq k\},$$ 
any $d$ vectors in $J_k$ are linearly independent,
(If $b\leq d$, there is no condition (**).)

For the benefit of a direct comparison with the New Separation Lemma, we rephrase it below.

\begin{lemmaw} Assume that (*) and (**) are satisfied. Let $\theta\in\mathbb R$, and set $x:=(n, h)\in\mathbb Z^{b+d}$. 
Let $\mathcal U\subset \mathbb Z^{b+d}$ be 
the sub-module generated by $u^{(0)}$. Consider a sequence, a subset  $\{x_k\}_{k=0}^\ell\subset\mathcal U\subset\mathbb Z^{b+d}$,
satisfying 
\begin{equation}
0\neq n_i-n_{i-1}=\sum_{k=1}^bm_ke_k, \, \sum_{k=1}^b|m_k|\leq p.
\end{equation}
Assume that 
for all $i$, $0\leq i\leq \ell$,
\begin{equation}\label{5.16}
(n_i\cdot\omega^{(0)}+\theta)^2-h_i^2-1=0.
\end{equation}
Then 
\begin{equation}
\ell\leq 4b,
\end{equation}
for all fixed $\theta$.
\end{lemmaw}

\begin{remark} Note that in spite  of the name, at $\omega=\omega^{(0)}$,
the lemma is valid for all scales. By perturbation, it maybe 
used at small scales for $\omega=\omega^{(0)}+\delta\omega'$, $\delta\ll1$,
hence its name.
\end{remark}



\begin{thebibliography}{100}















\bibitem [B1]{B1} J. Bourgain, ``Quasi-periodic solutions of Hamiltonian perturbations of 2D linear Schr\"odinger equations",
\emph{Ann. of Math.} \textbf{148}, 363-439 (1998)



\bibitem [B2]{B2} J. Bourgain, ``Green's function estimates for lattice Schr\"odinger operators and
applications", \emph {Ann. of Math. Stud.} \textbf{158}, 2005, Princeton University Press, Princeton, NJ









































\bibitem[EK]{EK} L. H. Eliasson, S. E. Kuksin, ``KAM for the nonlinear Schr\"odinger equation", 
\emph{Ann. of Math.} \textbf{172(2)}, 371-435 (2010)





















\bibitem[LX]{LX} K. Lapkova, S. Y. Xiao
``Density of power-free values of polynomials", 
\emph{Mathematika.}
\textbf{65(4)}, 1038-1050 (2019)










\bibitem [PP]{PrPr} C. Procesi, M. Procesi, ``A KAM algorithm for the resonant  non-linear Schr\"odinger equation",  
\emph{Adv. in Math.} \textbf{272}, 399-470 (2015)






\bibitem [Schm1]{Schm1}  W. Schmidt,
``Norm form equations", 
\emph{Ann. of Math.} 
\textbf{96}, 526-551 (1972)

\bibitem [Schm2]{Schm2}  W. Schmidt,
``Diophantine Approximations", 
\emph{Lecture Notes Math.} 
\textbf{785},  1980, Springer


\bibitem [W1]{W1} W.-M. Wang,
``Energy supercritical nonlinear Schr\"odinger equations: Quasi-periodic solutions", 
\emph{Duke Math. J.} 
\textbf{165, no. 6}, 1129-1192 (2016)



\bibitem [W2]{W2} W.-M. Wang, ``Quasi-periodic solutions to a nonlinear Klein-Gordon equation with a decaying nonlinear term", 
\emph{arXiv1609.00309}, 52pp (2021)

\bibitem [W3]{W3} W.-M. Wang, ``Semi-algebraic sets method in PDE and mathematical physics", 
\emph{J. Math. Phys. Special issue on Celebrating the work of Jean Bourgain}, 
\textbf{62}, 021506 (2021)













\end{thebibliography}
\end{document}